\def\qed{ \ \vrule width.2cm height.2cm depth0cm\smallskip}
\newcommand{\rephrase}[3]{\noindent\textbf{#1 #2}.~\emph{#3}}
\newif\iflong
\begin{document}

\title{Thrackles: An Improved Upper Bound}

\author{Radoslav Fulek\inst{1}\thanks{The author greatfully acknowledges  support from Austrian Science Fund (FWF): M2281-N35.} and J\'anos Pach\inst{2}\thanks{Supported by Swiss National Science Foundation Grants 200021-165977 and 200020-162884.}}

\institute{
IST Austria, Am Campus 1, Klosterneuburg 3400, Austria\\
\email{radoslav.fulek@gmail.com}
\and
\'Ecole Polytechnique F\'ed\'erale de Lausanne, Station 8, Lausanne 1015, Switzerland and R\'enyi Institute, Hungarian Academy of Sciences, 
P.O.Box 127 Budapest, 1364, Hungary \\
\email{ pach@cims.nyu.edu}
}

\maketitle

\begin{abstract}
A {\em thrackle} is a graph drawn in the plane so that every pair of its edges meet exactly once: either at a common end vertex or in a proper crossing.
We prove that any thrackle of $n$ vertices has at most $1.3984n$ edges.
{\em Quasi-thrackles} are defined similarly, except that every pair of edges that do not share a vertex are allowed to cross an {\em odd} number of times. It is also shown that the maximum number of edges of a quasi-thrackle on $n$ vertices is ${3\over 2}(n-1)$, and that this bound is best possible for infinitely many values of $n$.
\end{abstract}

\section{Introduction}

Conway's thrackle conjecture~\cite{W71} is one of the oldest open problems in the theory of topological graphs. A {\em topological graph} is a graph drawn in the plane so that its vertices are represented by points and its edges by continuous arcs connecting the corresponding points so that (i) no arc passes through any point representing a vertex other than its endpoints, (ii) any two arcs meet in finitely many points, and (iii) no two arcs are tangent to each other. A {\em thrackle} is a topological graph in which any pair of edges (arcs) meet precisely once. According to Conway's conjecture, every thrackle of $n$ vertices can have at most $n$ edges. This is analogous to Fisher's inequality~\cite{F40}: If every pair of edges of a hypergraph $H$ have precisely one point in common, then the number of edges of $H$ cannot exceed the number of vertices.

The first linear upper bound on the number of edges of a thrackle, in terms of the number of vertices $n$, was established in~\cite{LPS97}. This bound was subsequently improved in~\cite{CN00} and~\cite{FP11}, with the present record, $1.4n$, held by Goddyn and Xu~\cite{GX17}, which also appeared in the master thesis of the second author~\cite{Xu14}. One of the aims of this note is to show that this latter bound is not best possible.

\begin{theorem}
\label{thm:thrackle}
Any thrackle on $n>3$ vertices has at most $1.3984n$ edges.
\end{theorem}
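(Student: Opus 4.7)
The plan is to strengthen the framework of Goddyn and Xu~\cite{GX17} via a more careful local analysis. Let $T$ be a thrackle drawing of a graph $G$ with $n$ vertices and $m$ edges, and suppose, for contradiction, that $m>1.3984n$. Choose such a $G$ with $n$ minimum. Standard reductions should then apply: if $G$ contains a vertex of degree $\le 1$, removing it yields a smaller counterexample, and more generally any local operation that deletes $k$ vertices together with fewer than $1.3984k$ edges while preserving the thrackle property is forbidden. This rules out pendant vertices and, with further work, many configurations centred on a vertex of degree $2$, so we may assume a relatively large minimum degree.

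Next I would bring in the classical structural tools for thrackles: the Lovász--Pach--Szegedy theorem~\cite{LPS97}, which asserts that a \emph{bipartite} thrackle on $n$ vertices has at most $n$ edges, together with the forbidden-subgraph constraints on thrackleable graphs (in particular, $C_4$ is not thrackleable, and pairs of even cycles interact in highly restricted ways). From these one extracts a decomposition of $G$ into a bipartite ``spanning'' subgraph of size at most $n$ plus an additional collection of ``odd'' edges whose interaction with the bipartite part is tightly controlled.

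The decisive step is then a refined discharging (or amortised-counting) argument: assign an initial weight to each vertex (say $\deg(v)$) and redistribute charge along rules dictated by the structural constraints above, aiming to show that the resulting average weight per vertex is at most $2\cdot 1.3984=2.7968$, which yields $m\le 1.3984n$. The main obstacle, and the reason the quantitative improvement over $1.4$ is so modest, will lie in handling the ``near-extremal'' configurations for the Goddyn--Xu bound: small subthrackles whose edge-to-vertex ratio approaches $1.4$. For each such configuration one must either exhibit an additional reduction or prove that its edge-to-vertex ratio is in fact strictly below $1.3984$; the delicate part is ensuring that these local savings accumulate globally rather than being cancelled at the interfaces between subconfigurations.
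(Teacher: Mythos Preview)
What you have written is a plan, not a proof, and the plan misses the mechanism that actually drives both Goddyn--Xu and the present paper. The argument is not a vertex-based discharging on $\deg(v)$, nor a decomposition into a bipartite spanning subgraph plus ``odd'' edges. The key structural input you never invoke is the Cairns--Nikolayevsky theorem: a (generalized) thrackle admits a \emph{parity embedding in the projective plane} (an ordinary planar embedding in the bipartite case). One first reduces to the triangle-free case (a thrackle contains at most one triangle, so deleting a single edge suffices), reduces to the $2$-connected case, and then works entirely with the faces of this projective embedding. Since $C_4$ is not thrackleable and every face has even size, every face is a $6^{+}$-face; Euler's formula $e+1\le n+f$ together with $2e\ge 6f$ already gives $e\le\tfrac32(n-1)$, and the whole point of the refinement is to push the lower bound on the average face size above $7$.

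That refinement is a \emph{face}-based discharging: $8^{+}$-faces send charge across shared edges to neighbouring $6$-faces, using that no two $6$-cycles in a thrackle share a vertex (so $6$-faces are well separated), and a small secondary redistribution among $8^{+}$-faces handles the borderline $8$-faces. One obtains $2e\ge\bigl(7+\tfrac{1}{48}\bigr)f$, and plugging into Euler gives $e\le\tfrac{337}{241}(n-1)<1.3984(n-1)$. Your proposed route---minimum-degree reductions, a bipartite-plus-extras decomposition, and amortised counting over vertices---does not connect to any of this; in particular, without the projective embedding you have no handle on faces, no Euler inequality to feed, and no concrete ``near-extremal configurations'' to analyse. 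As written the proposal does not contain a proof, and the sketched strategy would not reach the stated bound.
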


Several variants of the thrackle conjecture have been considered. For example, Ruiz-Vargas, Suk, and T\'oth~\cite{RST16} established a linear upper bound on the number of edges even if two edges are allowed to be {\em tangent} to each other. The notion of {\em generalized thrackles} was introduced in~\cite{LPS97}: they are topological graphs in which any pair of edges intersect an {\em odd} number of times, where each point of intersection is either a common endpoint or a proper crossing. A generalized thrackle in which no two edges incident to the same vertex have any other point in common is called a {\em quasi-thrackle}. We prove the following.

\begin{theorem}
\label{lemma:quasiUpper}
Any quasi-thrackle on $n$ vertices has at most
$\frac{3}{2}(n-1)$ edges, and this bound is tight for infinitely many values of $n$.
\end{theorem}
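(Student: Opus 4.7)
For the tightness part, I would exhibit the \emph{friendship graphs} $F_k$ (a.k.a.\ windmill graphs), consisting of $k$ triangles sharing a single common vertex, on $n=2k+1$ vertices with $m=3k=\frac{3}{2}(n-1)$ edges. A direct ``petal'' construction (place the shared vertex at the origin, draw each triangle as a thin petal, and let one edge of each triangle wind once around the center so as to pick up odd crossings with the edges of the other petals) realizes $F_k$ as a thrackle, hence as a quasi-thrackle. This gives infinitely many $n$ at which the bound is attained.

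For the upper bound, I would first reduce to the 2-connected case via block decomposition. If $G$ has blocks $B_1,\dots,B_b$ with $|V(B_i)|=n_i$ and $|E(B_i)|=m_i$, then $\sum_i(n_i-1)=n-1$; bridges satisfy the bound trivially, so if every 2-connected block $B_i$ satisfies $m_i\le\frac{3}{2}(n_i-1)$, summing yields the same bound for $G$.

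It therefore suffices to bound 2-connected quasi-thrackles. Here I would induct on $n_0:=|V(B)|$, with base case $K_3$ attaining equality. The inductive step locates a \emph{reducible configuration} in the 2-connected quasi-thrackle $B$ --- either a degree-$2$ vertex $v$ whose two neighbors are non-adjacent (which one ``suppresses'' by merging the two arcs at $v$ into a single drawn edge, losing one vertex and one edge) or a triangle containing two adjacent degree-$2$ vertices (which one deletes together, losing $2$ vertices and $3$ edges, keeping the arithmetic exact) --- and then invokes induction on a strictly smaller quasi-thrackle. A technical point is that naive arc-concatenation at a suppressed vertex flips the parity of crossings with the edges still incident to the endpoints; this is to be fixed by a local redrawing near the endpoints that adjusts crossing parities without disturbing the rest of the drawing.

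\textbf{Main obstacle.} The crux is the structural lemma that every 2-connected quasi-thrackle on $n_0\ge 4$ vertices admits one of the two reducible configurations above. Equivalently one must rule out 2-connected quasi-thrackles of minimum degree $\ge 3$ --- such a graph already has $\ge 3n_0/2$ edges and so would exceed the target bound --- and more generally, configurations in which no degree-$2$ vertex sits in an ``excisable'' position. I would attack this via a mod-$2$ topological analysis of the drawing: the quasi-thrackle conditions (zero crossings for adjacent edges, odd crossings for non-adjacent edges) impose strong constraints on the $\mathbb{F}_2$-intersection pairing restricted to the cycle space $Z(B;\mathbb{F}_2)$, and I would exploit these linear relations --- in the spirit of the Lov\'asz--Pach--Szegedy argument that gives the $2(n-1)$ bound for generalized thrackles, but sharpened by the ``no self-crossings at vertices'' restriction --- to force the existence of a low-degree vertex with the required neighbor structure.
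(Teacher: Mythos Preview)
Your tightness example is the right graph but the justification is wrong: a thrackle contains at most one triangle (Lemma~\ref{lemma:3-cycles}), so $F_k$ with $k\ge 2$ is \emph{not} thrackleable; one must draw it directly as a quasi-thrackle, as the paper does. More seriously, your inductive upper-bound scheme has two genuine gaps. The suppression of a degree-$2$ vertex $v$ with neighbours $u,w$ does not produce a quasi-thrackle: if $e$ is an edge disjoint from $\{u,v,w\}$ then $e$ crossed each of $uv$ and $vw$ an odd number of times, so it meets the concatenated arc an \emph{even} number of times --- the wrong parity for a non-adjacent pair --- and you only flagged the separate problem at edges through $u$ or $w$. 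For those you actually need \emph{zero} crossings, not merely even parity, so a ``local parity fix'' cannot keep you inside the class of quasi-thrackles. Second, the structural lemma you call the ``main obstacle'' is essentially the whole theorem (ruling out $2$-connected quasi-thrackles of minimum degree $\ge 3$ already forces $e<\tfrac32 n$), and the gesture toward an $\mathbb F_2$-pairing on the cycle space does not go beyond what yields the $2(n-1)$ bound for generalized thrackles; nothing in your sketch exploits the specific quasi-thrackle hypothesis.

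The paper's route is entirely different and avoids induction. The one new ingredient is that $C_4$ is not a quasi-thrackle (Lemma~\ref{lemma:c4}); this is precisely where the ``adjacent edges do not cross'' hypothesis is used. Combined with the known characterisation of generalized thrackles via parity embeddings (Corollary~\ref{thm:parityEmbedding}), a $2$-connected quasi-thrackle embeds in the plane or projective plane with every face of size at least $6$, and Euler's formula then gives $e\le\tfrac32(n-1)$ in one line.
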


The proof of Theorem~\ref{thm:thrackle} is based on a refinement of parity arguments developed by Lov\'asz {\em et al.}~\cite{LPS97}, by Cairns--Nikolayevsky~\cite{CN00}, and by Goddyn--Xu ~\cite{GX17}, and it heavily uses the fact that two adjacent edges cannot have any other point in common. Therefore, one may suspect, as the authors of the present note did, that Theorem~\ref{thm:thrackle} generalizes to quasi-thrackles. Theorem~\ref{lemma:quasiUpper} refutes this conjecture.

\section{Terminology}
\label{sec:prelim}

Given a topological graph $G$ in the projective or Euclidean plane, if it leads to no confusion, we will make no distinction in notation or terminology between its vertices and edges and the points and arcs representing them.
A topological graph with no crossing is called an {\em embedding}. A connected component of the complement of the union of the vertices and edges of an embedding is called a {\em face}. A \emph{facial walk} of a face is a closed walk in $G$ obtained by traversing a component of the boundary of $F$. (The boundary of $F$ may consist of several components.) The same edge can be traversed by a walk at most twice; the \emph{length} of the walk is the number of edges counted with multiplicities. The edges of a walk form its \emph{support}.

A pair of faces, $F_1$ and $F_2$, in an embedding are \emph{adjacent} (or {\em neighboring}) if there exists at least one edge traversed by a facial walk of $F_1$ and a facial walk of $F_2$.
In a connected graph, the \emph{size} of a face is the length of its (uniquely determined) facial walk.
A face of size $k$ (resp., at least $k$ or at most $k$) is called a \emph{$k$-face} (resp., $k^+$-face and $k^-$-face).

A \emph{cycle} of a graph $G$ is a closed walk along edges of $G$ without vertex repetition. (To emphasize this property, sometimes we talk about ``simple'' cycles.)
A cycle of length $k$ is called a \emph{$k$-cycle}.

A simple closed curve on a surface is said to be \emph{one-sided} if its removal does not disconnect the surface.
Otherwise, it is \emph{two-sided}. An embedding of a graph $G$ in the projective plane is called a \emph{parity embedding} if every odd cycle of $G$ is one-sided and every even cycle of $G$ is two-sided.
In particular, in a parity embedding every face is of even size.

\section{Proof of Theorem~\ref{thm:thrackle}}

\label{sec:thrackle}

For convenience, we combine two theorems from \cite{CN09} and \cite{LPS97}.

\begin{corollary}
\label{thm:parityEmbedding}
A graph $G$ is a generalized thrackle if and only if  $G$ admits a parity embedding in the projective plane.
In particular, any bipartite thrackle can be embedded in the (Euclidean) plane.
\end{corollary}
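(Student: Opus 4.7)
Plan: The plan is to prove each direction of the equivalence separately and then derive the bipartite case. The main obstacle, I expect, is the parity bookkeeping: in both directions one must match the combinatorial parity of cycle lengths to the topological parity of intersection numbers and verify that a single global rerouting works uniformly for every pair of edges.

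For the direction ``parity embedding $\Rightarrow$ generalized thrackle'', I would start with a parity embedding of $G$ in the projective plane and pick a one-sided simple closed curve $\gamma$ disjoint from the vertices and transverse to every edge. Cutting along $\gamma$ yields a closed disk $D$ in which each point of $\gamma$ corresponds to a pair of antipodal points on $\partial D$. Embedding $D$ inside the Euclidean plane and rerouting through the unbounded region those arc-pieces of edges that were split by $\gamma$, the number of crossings between two edges $e_1,e_2$ in the plane equals the number in the projective plane (which is zero) plus $\text{cr}(e_1,\gamma)\cdot \text{cr}(e_2,\gamma) \pmod{2}$. Since every odd cycle is one-sided (crosses $\gamma$ an odd number of times) and every even cycle is two-sided, a canonical $\{0,1\}$-coloring of the vertices relative to $\gamma$ shows that the resulting drawing has every pair of non-adjacent edges crossing an odd number of times, while adjacent edges keep an even crossing count.

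For the direction ``generalized thrackle $\Rightarrow$ parity embedding'', I would invoke the Hanani--Tutte-type theorem in the projective plane from \cite{CN09}. Starting from a generalized thrackle drawing in the Euclidean plane, adding a crosscap ``at infinity'' allows one to reroute edges through it; after a careful choice of which edges to redirect (again guided by the canonical two-coloring), every pair of non-adjacent edges crosses an \emph{even} number of times in the resulting drawing on the projective plane. The projective-plane Hanani--Tutte theorem then furnishes an actual embedding of $G$, and the parity embedding condition on cycles is recovered by tracking which cycles pass through the crosscap an odd number of times.

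For the bipartite thrackle claim, a bipartite $G$ has no odd cycles, so in any parity embedding every cycle is two-sided. Lifting the embedding to the orientation double cover $S^{2}$ then produces two vertex-disjoint and edge-disjoint copies of $G$; since the copies are disjoint in $S^2$, one copy lies inside an open connected region of the complement of the other, which is an open disk, and hence $G$ embeds in the Euclidean plane.
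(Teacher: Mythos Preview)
The paper does not prove this statement from scratch: it is labelled a \emph{Corollary}, and the ``proof'' consists of two citations---\cite[Theorem~2]{CN09} for the non-bipartite case and \cite[Theorem~1.4]{LPS97} for the bipartite case. Your proposal instead tries to reprove those cited theorems, which is a much more ambitious undertaking than what the paper asks for here.

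That said, your sketch of the direction ``parity embedding $\Rightarrow$ generalized thrackle'' has a genuine gap. Grant your crossing formula: after cutting along the one-sided curve $\gamma$ and rerouting in the plane, the crossing parity of edges $e_1,e_2$ becomes $\mathrm{cr}(e_1,\gamma)\cdot\mathrm{cr}(e_2,\gamma)\pmod 2$. The parity-embedding hypothesis does guarantee a colouring $c:V\to\{0,1\}$ with $\mathrm{cr}(uv,\gamma)\equiv c(u)+c(v)+1\pmod 2$, but this does \emph{not} force $\mathrm{cr}(e_1,\gamma)\cdot\mathrm{cr}(e_2,\gamma)$ to be odd for every independent pair. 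Concretely, for a parity embedding of $K_4$ and a $\gamma$ giving the colouring $c(1)=c(2)=0$, $c(3)=c(4)=1$, the edges $13$ and $24$ both cross $\gamma$ evenly, so after your rerouting they cross each other evenly---not a generalized thrackle. One can rescue the argument by first isotoping $\gamma$ across vertices until $c$ is constant (equivalently, until every edge meets $\gamma$ oddly), but this extra step is exactly the missing idea, and it needs its own justification that such a simple one-sided $\gamma$ exists. Your direction ``generalized thrackle $\Rightarrow$ parity embedding'' has a parallel vagueness: the ``canonical two-colouring'' you invoke does not exist when $G$ is non-bipartite, so which edges get pushed through the crosscap is left unspecified.

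Your argument for the bipartite consequence (all cycles two-sided $\Rightarrow$ the orientation double cover splits into two disjoint embedded copies on $S^2$ $\Rightarrow$ planarity) is correct and is a clean alternative to simply citing \cite{LPS97}.
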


\begin{proof}
If $G$ is a non-bipartite generalized thrackle, then, by a result of Cairns and Nikolayevsky~\cite[Theorem 2]{CN09}, it admits a  parity embedding in the projective plane.

On the other hand, Lov\'asz, Pach, and Szegedy~\cite[Theorem 1.4]{LPS97} showed that a bipartite graph is a generalized thrackle if and only if it is planar, in which case it
can be embedded in the projective plane so that every cycle is two-sided.\qed
\end{proof}

The proof of the next lemma is fairly simple and is omitted in this version.

\begin{lemma}
\label{lemma:3-cycles}
A thrackle does not contain more than one triangle.
\end{lemma}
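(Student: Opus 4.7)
The plan is to argue by contradiction: assume that the thrackle contains two distinct triangles $T_1,T_2$, and split into cases according to $k:=|V(T_1)\cap V(T_2)|\in\{0,1,2\}$. The preliminary observation I would use throughout is that each triangle, being drawn in the thrackle, is a Jordan curve, since its three edges are pairwise adjacent and hence meet only at their common endpoints.

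The two easy cases will be $k=0$ and $k=2$. For $k=0$, every pair of edges---one from each triangle---is non-adjacent and so must cross exactly once; the two Jordan curves therefore meet in $3\times 3=9$ transverse points, contradicting the classical fact that two simple closed curves in the plane meet in an even number of points. For $k=2$, the two shared vertices are the endpoints of a common edge (any two vertices of a triangle are adjacent), so one may write $T_1=abc$ and $T_2=abd$; the remaining four edges form the 4-cycle $a$--$c$--$b$--$d$--$a$, whose restriction of the thrackle drawing would realize $C_4$ as a thrackle, which is impossible by Woodall's classical result~\cite{W71}.

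The case $k=1$ is the main technical obstacle. Writing $T_1=abc$ and $T_2=ade$ with shared vertex $a$, I would first count the transverse intersections of the two Jordan curves away from $a$: they amount to exactly $5$ (the edge $de$ crosses each of $ab,bc,ca$ once, and each of $ad,ae$ crosses $bc$ once). The even-intersection condition then forces the curves to cross also at $a$, so the four half-edges at $a$ must appear in an alternating cyclic order between $T_1$- and $T_2$-edges. Combined with the facts that $d,e$ lie on opposite sides of $T_1$ (and symmetrically $b,c$ lie on opposite sides of $T_2$), this pins down the local combinatorics near $a$ completely; a final bookkeeping---tracking the forced cyclic order of the three crossings along $de$ against the three crossings along $bc$, or equivalently performing a controlled perturbation that splits $a$ into two nearby vertices and reducing to $k=0$---then yields the contradiction.
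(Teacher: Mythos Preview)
Your outline is correct and follows essentially the same route as the paper: both reduce to the shared-vertex case (the paper by citing \cite{LPS97}, you by direct parity counts for $k=0$), both establish that the two triangle-curves cross transversally at the shared vertex so that $d,e$ lie on opposite sides of $T_1$, and both finish by a crossing-order analysis along $de$ versus the edges of $T_1$ (and symmetrically along $bc$), which the paper, like you, leaves as a short case check.

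One caveat: your alternative ``controlled perturbation that splits $a$ and reduces to $k=0$'' does not work as stated. After splitting $a$ into $a_1\in T_1'$ and $a_2\in T_2'$, the former tangency/crossing at $a$ becomes a single transverse intersection between \emph{one} pair of half-edges; the other three of the four newly non-adjacent pairs among $\{a_1b,a_1c\}\times\{a_2d,a_2e\}$ then fail to cross, so the perturbed drawing is no longer a thrackle, and the two closed curves meet $6$ times rather than $9$---no parity contradiction. So drop that alternative and keep the crossing-order bookkeeping, which is exactly what the paper does.
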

\iflong
\begin{proof}
Refer to Fig.~\ref{fig:3-cycle}.
By Lemma~\cite[Lemma 2.1]{LPS97}, every pair of triangles in a thrackle share a vertex.
A pair of triangles cannot share an edge, otherwise they would form a 4-cycle, and a thrackle cannot contain a 4-cycle, since 4-cycle is not a thrackle, which is easy to check.

Let $T_1=vzy$ and $T_2=vwu$ be two triangles that have a vertex $v$ in common.
By Lemma~\cite[Lemma 2.2]{LPS97}, the two closed curves representing $T_1$ and $T_2$ properly cross each other at $v$.
Hence, the closed Jordan curve $C_1$ corresponding to $T_1$ contains $w$ in its exterior and $u$ in its interior.
Thus, the drawing of $T_1\cup \{uv,uw\}$ in a thrackle is uniquely determined up to isotopy and the choice of the outer face.
If we traverse the edge $wu$ from one endpoint to the other, we encounter its crossings  with the edges $vy,yz$, and $zv$ in this or in the reversed order.
 Indeed, the crossings between $wu$ and $vz$, and $wu$ and $vy$ must be in different
connected components of the complement of the union of $zy,vw$, and $vu$ in the plane.
By symmetry, the crossing of $zy$ and $wu$ is on both $zy$ and $wu$ between the other two crossings.
Now, a simple case analysis reveals that this is impossible in a thrackle. We obtain a contradiction, which proves the lemma.\qed
\end{proof}
\fi

Next, we prove Theorem~\ref{thm:thrackle} for triangle-free graphs. Our proof uses a refinement of the discharging method of Goddyn and Xu~\cite{GX17}.

\begin{lemma}
\label{lemma:3-cycleFree}
Any triangle-free thrackle on $n>3$ vertices has at most $1.3984(n-1)$ edges.
\end{lemma}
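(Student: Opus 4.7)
The plan is to sharpen the discharging argument of Goddyn and Xu~\cite{GX17}. Apply Corollary~\ref{thm:parityEmbedding} to obtain a parity embedding $\mathcal{E}$ of $G$, in the projective plane if $G$ is non-bipartite and in the Euclidean plane if $G$ is bipartite. We may assume $G$ is connected, since otherwise the bound can be applied componentwise with a strict improvement. The two cases---bipartite (Euclidean) and non-bipartite (projective)---can be handled in parallel, but since Euler's formula is tighter in the projective plane ($V-E+F=1$) the non-bipartite case is the harder and more informative one, so I focus on it.

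Triangle-freeness of $G$ together with the fact that $C_4$ is not a thrackle (as used in the proof of Lemma~\ref{lemma:3-cycles}) forces girth at least $5$, and the parity of face lengths then forces every face of $\mathcal{E}$ to have length at least $6$, with the exception of degenerate $4$-faces coming from pendant paths of length $2$, which can be accounted for separately. Assign to each face $f$ the initial charge $\mu(f)=\ell(f)-4\ge 0$. Euler's formula gives $\sum_f \mu(f)=2E-4F=-2E+4(n-1)$, so the target $E\le 1.3984(n-1)$ is equivalent to $\sum_f \mu(f)\ge 1.2032(n-1)$. Goddyn and Xu reach only $\sum_f \mu(f)\ge 1.2(n-1)$ via a discharging rule that transfers a fixed amount of charge along each edge shared by a short and a long face. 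The plan is to refine this rule by extracting additional surplus charge from local configurations of $\mathcal{E}$ that are forced by the thrackle axiom.

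Concretely, the goal is to establish a structural restriction of the following form: in the parity embedding of a triangle-free thrackle, certain ``dense packings'' of $6$-faces around a common edge or vertex are forbidden, because any such packing, traced out on the drawing, would force either a second triangle (ruled out by Lemma~\ref{lemma:3-cycles}), a $C_4$, or a pair of adjacent edges sharing more than one point---each of which contradicts the thrackle hypothesis. One then defines a refined discharging rule that sends a small fixed quantum $\varepsilon>0$ of charge from each face of length at least $10$ (and from each face adjacent to a forbidden cluster) to nearby $6$-faces, with bookkeeping showing that the resulting total charge is at least $1.2032(n-1)$, giving the desired bound. The main obstacle will be the case analysis establishing the structural restriction: the thrackle property is inherently global, and verifying it on a small subgraph of about a dozen edges requires enumerating several rotation systems around the shared edge or vertex and ruling out each incompatible case via a parity or crossing argument. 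Once the structural lemma is in place, the discharging bookkeeping is routine and yields the precise constant $1.3984$.
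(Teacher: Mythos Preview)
Your high-level strategy---parity embedding via Corollary~\ref{thm:parityEmbedding}, then a discharging refinement of Goddyn--Xu---is exactly the paper's approach, and your Euler bookkeeping is correct. However, the proposal remains a plan rather than a proof, and the two places where you leave things vague are precisely where the actual content lies.

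First, the structural input is not the one you guess. The key fact the paper imports is from~\cite{FP11}: \emph{no two $6$-cycles in a thrackle share a vertex}. This single statement is what controls the $6$-faces; it is not proved here by any local rotation-system enumeration, and your proposed case analysis ``around a shared edge or vertex'' would be reinventing a nontrivial earlier result. From this fact one gets immediately that every $6$-face is completely surrounded by $8^+$-faces, and that an $8$-face cannot have seven of its eight edges shared with $6$-faces.

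Second, the extra charge does not come from $10^+$-faces as you suggest; the bottleneck configuration is an $8$-face with six bad edges. The paper's refinement is a three-pass discharging: send $1/6$ from each $8^+$-face to each adjacent $6$-face; then prove (Proposition~\ref{claim:8^+}) that two adjacent $8^+$-faces cannot both end at charge exactly~$7$ unless $G$ is a specific $12$-vertex graph, and use this to pass $1/24$ between adjacent $8^+$-faces; finally send an extra $1/288$ through each bad edge. This yields a uniform lower bound of $7+\tfrac{1}{48}$ on every face charge, hence $2e\ge (7+\tfrac{1}{48})f$ and $e\le \tfrac{7+1/48}{5+1/48}(n-1)\le 1.3984(n-1)$. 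A single rule with a fixed $\varepsilon$ from long faces will not reach this constant, because the tight examples have no long faces at all (cf.\ Lemma~\ref{lemma:86}).

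Finally, you need to reduce to $2$-connected, not merely connected: this is what eliminates your ``degenerate $4$-faces from pendant paths'' and underpins Proposition~\ref{claim:cycles}, which guarantees that every $8^-$-face is bounded by a genuine cycle.
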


\begin{proof}
Since no 4-cycle can be drawn as a thrackle, the lemma holds for graphs with fewer than 5 vertices.
We claim that a vertex-minimal counterexample to the lemma is (vertex) 2-connected.
Indeed, let $G=G_1\cup G_2$, where $|V(G_1) \cap V(G_2)|<2\le |V(G_1)|,|V(G_2)|$. Suppose that $|V(G_1)|=n'$.
By the choice of $G$, we have
$|E(G)|=  |E(G_1)| + |E(G_2)| \le  1.3984(n'-1)+1.3984(n-n') = 1.3984(n - 1)$.

Thus, we can assume that $G$ is 2-connected.
Using Corollary~\ref{thm:parityEmbedding}, we can embed $G$ as follows.
If $G$ is not bipartite, we construct a  parity embedding of $G$
in the projective plane.
If $G$ is bipartite, we construct an embedding of $G$ in the Euclidean plane.
Note that in both cases, the size of each face of the embedding is even.

The following statement can be verified by a simple case analysis. It was removed from the short version of this note.

\begin{proposition}
\label{claim:cycles}
In the parity embedding of a 2-connected thrackle in the projective plane, the facial walk of every $8^{-}$-face is a cycle, that is, it has no repeated vertex.
\end{proposition}
\iflong
\begin{proof}
If $G$ is bipartite, the claim follows by the 2-connectivity of $G$ and by the fact that the 4-cycle is not a thrackle.

Suppose $G$ is not bipartite. Then $G$ cannot contain $4^{-}$-face,
since we excluded triangles (by the hypothesis of the lemma) and 4-cycles (which are not thrackles).
We can also exclude any 5-face $F$, because either the facial walk of $F$ is a 5-cycle, which is impossible in a parity embedding, or the facial walk contains a triangle.

Analogously, if $F$ is a 7-face, its facial walk cannot be a  cycle (with no repeated vertex).
Hence, the support of $F$ must contain a 5-cycle. Using the fact that $G$ has no
triangle and 4-cycle, we conclude that $F$ must be incident to a cut-vertex, a contradiction.

It remains to deal with 6-faces and 8-faces.
If the facial walk of a 6-face $F$ is not a 6-cycle, then its support is a path of length three or a 3-star. In this case, $G$ is a tree on three vertices, contradicting our assumption that $G$ is 2-connected. Thus, the facial walk of every 6-face must be a 6-cycle.

The support of the facial walk of an 8-face $F$ cannot contain a 5-cycle, because in this case it would also contain a triangle. Therefore, the support of $F$ must contain a 6-cycle. The remaining (2-sided) edge of $F$ cannot be a diagonal of this cycle (as then it would create a triangle or a 4-cycle), and it cannot be a ``hanging'' edge (because this would contradict the 2-connectivity of $G$). This completes the proof of the proposition.\qed
\end{proof}
\fi

To complete the proof of Lemma~\ref{lemma:3-cycleFree}, we use a discharging argument.
Since $G$ is embedded in the projective plane, by Euler's formula
we have
\begin{equation}
\label{eqn:Euler}
 e+1\le n+f
\end{equation}
where $f$ is the number of faces and $e$ is the number of edges of the embedding.

We put a charge $d(F)$ on each face $F$ of $G$, where $d(F)$ denotes the size of $F$, that is, the length of its facial walk.
An edge is called \emph{bad} if it is incident to a 6-face.
Let $F$ be an $8^+$-face.
Through every bad edge $uv$ of $F$, we discharge from its charge a  charge of $1/6$ to the neighboring 6-face on the other side of $uv$.

We claim that every face ends up with a charge  at least $7$.
Indeed, we proved in~\cite{FP11} that in a thrackle no pair of 6-cycles can share a vertex.
By Proposition~\ref{claim:cycles}, $G$ has no 8-face with 7 bad edges. Furthermore, every $8^{-}$-face is a 6-face or an 8-face, since in a parity embedding there is no odd face, and 4-cycles are not thrackleable.

\begin{proposition}
\label{claim:8^+}
Unless $G$ has 12 vertices and 14 edges, no two $8^+$-faces that share an edge can end up with charge precisely 7.
\end{proposition}

\begin{proof}
An 8-face $F$ with charge 7 must be adjacent to a pair of 6-faces, $F_1$ and $F_2$.
By Proposition~\ref{claim:cycles}, the facial walks of $F,F_1,$ and $F_2$ are cycles. Since $G$ does not contain a cycle of length 4, both $F_1$ and $F_2$ share three edges with $F$, or one of them shares two edges with $F$ and the other one four edges. Hence, any 8-face $F'$ adjacent to $F$ shares an edge $uv$ with
$F$, whose both endpoints are incident to a 6-face.
If $F'$ has charge 7, both edges adjacent to $uv$ along the facial walk $F'$ must be incident to a 6-face. By the aforementioned result from~\cite{FP11},
these 6-faces must be $F_1$ and $F_2$. By Proposition~\ref{claim:cycles}, the facial walk of $F'$ is an 8-cycle.
Since $F'$ shares 6 edges with $F_1$ and $F_2$, we obtain that $G$
has only 4 faces $F,F',F_1,$ and $F_2$.\qed
\end{proof}

In the case where $G$ has 12 vertices and 14 edges, the lemma is true.
By Proposition~\ref{claim:8^+}, if a pair of $8^+$-faces share an edge, at least one of them ends up with a charge at least $43/6$. Let $F$ be such a face. We can further discharge $1/24$ from the charge of $F$ to each neighboring $8^+$-face. After this step, the remaining charge of $F$ is at least ${43\over 6}-3{1\over 24}=7+{1\over 24}$, which is possibly attained only by an 8-face that shares 5 edges with 6-faces. Every $9^+$-face $F'$ has charge at least $d(F')-{d(F')\over 6}\ge 7+{1\over 2}$.


%

In the last discharging step, we discharge through each bad edge of an $8^+$-face an additional charge of $1/288$ to the neighboring $6$-face. At the end, the charge of every face is at least $7+{1\over 24}-6{1\over 288}=7+{1\over 48}$. Since the total charge $\sum_F d(F)=2e$ has not changed during the procedure, we obtain
$2e\ge (7+{1\over 48})f$. Combining this with~(\ref{eqn:Euler}), we conclude that $$ e\le \frac{7+{1\over 48}}{5+{1\over 48}}n-\frac{7+{1\over 48}}{5+{1\over 48}}\le 1.3984(n-1),$$
which completes the proof of Lemma~\ref{lemma:3-cycleFree}.\qed
\end{proof}

Now we are in a position to prove Theorem~\ref{thm:thrackle}.

\medskip

\noindent{\it Proof of Theorem~\ref{thm:thrackle}. }
 If $G$ does not contain a triangle, we are done by Lemma~\ref{lemma:3-cycleFree}.
Otherwise, $G$ contains a triangle $T$. We remove an edge of $T$ from $G$
and denote the resulting graph by $G'$.
According to Lemma~\ref{lemma:3-cycles}, $G'$ is triangle-free.
Hence, by Lemma~\ref{lemma:3-cycleFree}, $G'$ has at most $1.3984(n-1)$ edges, and
it follows that $G$ has at most $1.3984(n-1)+1<1.3984n$ edges.\qed

\medskip

\begin{figure}[htp]

\centering
\includegraphics[scale=0.5]{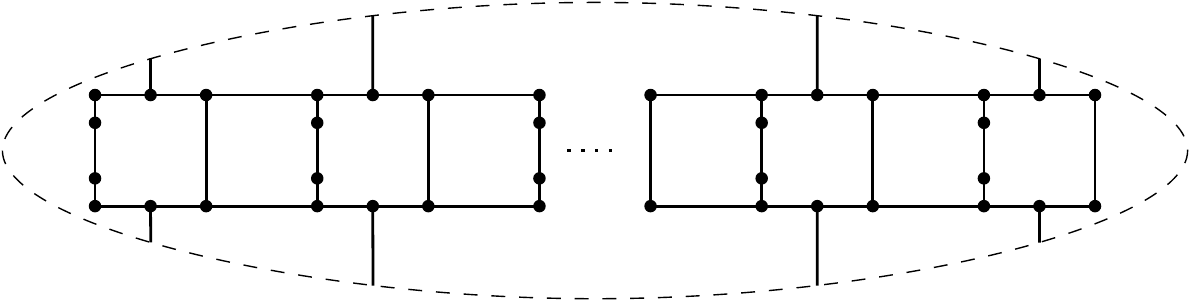}
\caption{Graph $H(k)$ embedded in the projective plane such that the embedding is a parity embedding. The projective plane is obtained by identifying the opposite pairs of points on the ellipse.
}
\label{fig:86}
\end{figure}

\begin{remark}
Without introducing any additional forbidden configuration, our methods cannot lead to an upper bound in  Theorem~\ref{thm:thrackle}, better than ${22\over 16}n=1.375n$.
\end{remark}

This is a simple consequence of the next lemma.
Let $H(k)$ be a graph obtained by taking the union of a pair of vertex-disjoint paths $P=p_1\ldots p_{6k}$ and $Q=q_1\ldots q_{6k}$
 of length $6k$; edges $p_iq_i$ for all $i\mod 3= 0 $; edges $p_iq_{6k-i}$ for all $i\mod 3 = 2$; and paths $p_ip_i'p_i''q_i$, for all $i\mod 3=1$, which are internally
 vertex-disjoint from $P, Q,$ and from one another.

 \begin{lemma}
\label{lemma:86}
For every $k\in \mathbb{N}$, the graph $H(k)$ has $16k$ vertices and $22k-2$ edges, it contains no two 6-cycles that share a vertex or are joined by an edge, and  it
admits a parity embedding in the projective plane.
\end{lemma}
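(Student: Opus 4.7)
The vertex and edge counts reduce to direct summation: each of the three residue classes modulo~\(3\) contains exactly \(2k\) indices in \([1,6k]\), so \(P\) and \(Q\) together contribute \(12k\) vertices and \(12k-2\) path edges, the \(2k\) length-three subdivided connections at \(i\equiv 1\pmod 3\) contribute \(4k\) internal vertices and \(6k\) edges, and the \(2k\) direct edges plus the \(2k\) across edges contribute the remaining \(4k\) edges, totalling \(16k\) vertices and \(22k-2\) edges.

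For the parity embedding, the plan is to model \(\mathbb{R}P^2\) as a closed disk with antipodal boundary identification, and realize \(P\) and \(Q\) as two disjoint interior arcs whose endpoints are antipodal pairs on the disk boundary, arranged so that the strip bounded by \(P\cup Q\) is a topological rectangle. I would then route the direct edges \(p_iq_i\) and the length-three connections \(p_ip_i'p_i''q_i\) non-crossingly inside that strip in order of \(i\), and draw each across edge \(p_iq_{6k-i}\) through the antipodal identification, exiting the strip through one short side of the rectangle and re-entering at the antipodal point on the other short side. The parity property then follows from the standard fact that a closed curve in \(\mathbb{R}P^2\) is one-sided iff it meets the identified boundary an odd number of times: in this drawing a cycle of \(H(k)\) is one-sided iff it uses an odd number of across edges, and a short residue-mod-\(3\) bookkeeping on the three connection types (direct edges and length-three connections preserve residue, while each across edge jumps from residue \(2\) to residue \(1\)) shows that this parity coincides with the parity of the length of the cycle.

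The 6-cycle separation claim I would verify by enumerating all 6-cycles of \(H(k)\). Any such cycle uses an even number of cross connections between \(P\) and \(Q\); each direct or across edge contributes \(1\) to the length and each length-three connection contributes \(3\); and together with the lengths of the intervening arcs on \(P\) and \(Q\) the total must equal \(6\). A case analysis on the multiset of connection types used, combined with the congruences modulo~\(3\) forced on their \(P\)- and \(Q\)-endpoints, sharply restricts the admissible configurations and leaves only a short explicit list of 6-cycles, each confined to a window of width \(O(1)\) in the index \(i\). The main obstacle---and the bulk of the work---is completing this case analysis cleanly; once it is done, the vertex-disjointness and the absence of any edge between distinct 6-cycles follow from the observation that the auxiliary vertices \(p_i'\) and \(p_i''\) inserted at each \(i\equiv 1\pmod 3\) act as insulators separating consecutive 6-cycles along the paths, which is the very reason those subdivisions are part of the construction.
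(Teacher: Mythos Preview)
The paper's proof is far terser: it counts edges via the degree sequence, points to the figure for the embedding, and disposes of the 6-cycle condition with the single observation that in that embedding \emph{all} 6-cycles of $H(k)$ are facial---the separation property is then read off from the face structure rather than from a combinatorial enumeration. Your plan is more explicit and in principle workable, but there is a genuine gap in the parity step.

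You claim that a cycle is one-sided in your drawing iff it uses an odd number of across edges, and that a mod-$3$ residue check shows this parity coincides with the parity of the cycle's length. With the across edges as stated, $p_iq_{6k-i}$ for $i\equiv 2\pmod 3$, this is false: the $4$-cycle $p_{3k-1}\,p_{3k}\,q_{3k}\,q_{3k+1}$ (one path edge on $P$, the direct edge $p_{3k}q_{3k}$, one path edge on $Q$, and the single across edge $p_{3k-1}q_{6k-(3k-1)}=p_{3k-1}q_{3k+1}$) has even length but exactly one across edge, so your drawing would render it one-sided. In fact, the $2$-coloring $p_i\mapsto i\bmod 2$, $q_j\mapsto (j{+}1)\bmod 2$, extended along the subdivided paths, is proper on \emph{every} edge, so $H(k)$ as literally defined is bipartite and no single edge class can carry the crosscap. (This, together with the degree sequence quoted in the paper's own proof, strongly suggests the intended across edges are $p_iq_{6k+1-i}$; with that index the across edges become exactly the monochromatic edges under the same $2$-coloring, your parity claim goes through verbatim, and the $4$-cycle above disappears.) Either way, the ``mod-$3$ bookkeeping'' you invoke does not by itself yield the length parity; the clean justification is a vertex $2$-coloring in which precisely the across edges are monochromatic.
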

\begin{proof}
For every $k$, $H(k)$ has $12k-4$ vertices of degree three and $4k+4$ vertices of degree two.
Thus, $H(k)$ has $3(6k-2)+4k+4=22k-2$ edges.
A projective embedding of $G(k)$ with the required property is depicted in Figure~\ref{fig:86}.
Using the fact that all 6-cycles are facial, the lemma follows.\qed
\end{proof}

\begin{remark}
It was stated without proof in ~\cite{CN09} that the thrackle conjecture has been verified by computer up to $n=11$. Provided that this is true, the upper bound in Theorem~\ref{thm:thrackle} can be improved to
$ e\le \frac{7+{1\over 5}}{5+{1\over 5}}(n-1)\le 1.3847(n-1)$. This follows from the fact that in this case an $8$-face and a $6$-face can share at most
one edge and therefore we can maintain a charge of at least $7+\frac 15$ on every face.
\iflong Indeed, we change our discharging procedure so that we just send to every 6-face  a charge of $1/5$ from every neighboring face. Since an $8$-face has at most four 6-faces as its neighbors, also every $8^+$-face ends up with a charge of at least $7+\frac 15$  as required. \fi
\end{remark}

\section{Proof of Theorem~\ref{lemma:quasiUpper}}
\label{sec:quasiUpper}

It is known~\cite{CN00} that $C_4$, a cycle of length 4, can be drawn as a generalized thrackle.
Hence, our next result whose simple proof is left to the reader implies that the class of quasi-thrackles forms a proper subclass of
the class of generalized thrackles.

\begin{lemma}
\label{lemma:c4}
$C_4$ cannot be drawn as a quasi-thrackle.
\end{lemma}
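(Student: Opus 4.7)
The plan is to derive a contradiction by analyzing the planar subdivision induced by the two nonadjacent edges of a hypothetical quasi-thrackle drawing of $C_4$. Label the vertices $v_1,v_2,v_3,v_4$ and edges $e_i=v_iv_{i+1}$ (indices mod $4$), and suppose such a drawing exists. Then adjacent edges meet only at their common vertex, while $e_1,e_3$ cross an odd number of times and so do $e_2,e_4$.

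First I would pass to the sphere $S^2$ and consider the subdivision induced by $e_1\cup e_3$. The quasi-thrackle hypothesis implies that each of the open arcs $e_2\setminus\{v_2,v_3\}$ and $e_4\setminus\{v_4,v_1\}$ is disjoint from $e_1\cup e_3$, so each lies entirely in a single face of that subdivision; since $e_2$ and $e_4$ must meet, both lie in the same face, call it $F$. In particular, all four vertices $v_1,v_2,v_3,v_4$ appear on $\partial F$, the facial boundary of a topological disk.

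Next comes the key step: insert an auxiliary arc $\alpha$ inside $F$ from $v_1$ to $v_2$ whose interior avoids the rest of the drawing. Then $\alpha\cup e_1$ is a Jordan curve on $S^2$, and because $\alpha$ sits in a face of $e_1\cup e_3$ that is disjoint from $e_3$, the arc $e_3$ meets $\alpha\cup e_1$ transversely at exactly the (odd number of) crossings $e_1\cap e_3$. By Jordan-curve parity, $v_3$ and $v_4$ lie in different components of $S^2\setminus(\alpha\cup e_1)$. Since $\alpha$ splits $\overline F$ into two subdisks, this forces $v_3$ and $v_4$ to lie on the boundaries of different subdisks; equivalently, in the cyclic order of $v_1,v_2,v_3,v_4$ along $\partial F$, the pair $\{v_1,v_2\}$ interleaves the pair $\{v_3,v_4\}$.

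Finally, in this cyclic order the endpoint pairs $\{v_2,v_3\}$ and $\{v_4,v_1\}$ of $e_2$ and $e_4$ are \emph{not} interleaved on $\partial F$. A standard Jordan-curve parity argument for arcs in a disk then shows that two such arcs must cross an even number of times, contradicting the hypothesis that $|e_2\cap e_4|$ is odd. I expect the only delicate point is the Jordan-curve step applied to $\alpha\cup e_1$: one must verify that $\alpha$ can indeed be drawn inside $F$ avoiding $v_3,v_4$ and the rest of the drawing, so that $e_3$ truly meets $\alpha\cup e_1$ only at the odd crossings $e_1\cap e_3$. The other steps are structural or routine.
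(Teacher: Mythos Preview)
Your argument is correct and takes a genuinely different route from the paper's. The paper builds two explicit Jordan curves $C_u$ and $C_w$ out of pieces of the four edges (using the \emph{first} crossing of $uz$ with $vw$, and of $wz$ with $uv$), shows that the fourth vertex $z$ is separated from $w$ by $C_u$ and from $u$ by $C_w$, and then derives a contradiction from the fact that $C_u$ and $C_w$ share an arc but do not cross, so the two ``$z$--regions'' cannot be nested consistently. You instead look at the arrangement $e_1\cup e_3$ globally, locate all four vertices on the boundary of a single face $F$, pin down their cyclic order with one auxiliary Jordan-curve step, and finish with the standard linked/unlinked chord-parity lemma applied to $e_2,e_4$. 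Your approach is more structural and makes the parity mechanism transparent; the paper's is more hands-on and avoids talking about the face structure of $e_1\cup e_3$ altogether.

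One small technical point you should patch: since $v_1,v_2,v_3,v_4$ are degree-$1$ vertices of the plane graph $e_1\cup e_3$, the set $\partial F$ need not be a simple closed curve in $S^2$ (for instance, when $|e_1\cap e_3|=1$ there is only one face and $\partial F=e_1\cup e_3$ is a cross), so $\overline F$ need not be a closed disk. Thus ``cyclic order on $\partial F$'' and ``$\alpha$ splits $\overline F$ into two subdisks'' should be read via the \emph{boundary walk} of $F$ (equivalently, pass to the abstract closed disk by a Carath\'eodory-type extension, or shrink $F$ slightly to a genuine closed disk whose boundary circle meets each of $e_2,e_4$ in two points). With that reading every step goes through as written; the Jordan-curve step with $\alpha\cup e_1$ in particular is already fine as stated, since it only uses that the interior of $\alpha$ lies in $F$ and that $v_1,v_2\notin e_3$.
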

\iflong
\begin{proof}
Suppose for contradiction that $C_4=uvwz$ can be drawn as a quasi-thrackle; see Fig.~\ref{fig:c4}.
Assume without loss of generality that in the corresponding drawing, the path $uvw$ a path $uvw$ does not intersect itself.
Let $c_1$ denote the first crossing along $uz$ (with $vw$) on the way from $u$.
Let $c_2$ denote the first crossing along $wz$ (with $uv$) on the way from $w$.
Let $C_u$ denote the closed Jordan curve consisting of $uv$; the portion of
$uz$ between $u$ and $c_1$; and the portion of $vw$ between $v$ and $c_1$.
Let $C_w$ denote the closed Jordan curve consisting of $vw$; the portion of
$wz$ between $w$ and $c_2$; and the portion of $uv$ between $v$ and $c_2$.

Observe that $z$ and $w$ are not contained in the same connected component of the complement
of $C_u$ in the plane. Indeed, $wz$ crosses $C_u$ an odd number of times, since it can cross
it only in $uv$. Let $\mathcal{D}_u$ denote the connected component of the complement
of $C_u$ containing $z$. By a similar argument,
$z$ and $u$ are not contained in the same connected component
of the complement of $C_w$ in the plane. Let $\mathcal{D}_w$ denote the connected component of the complement
of $C_w$  containing $z$.

Since $z\in\mathcal{D}_u \cap \mathcal{D}_w$, we have that $\mathcal{D}_u \cap \mathcal{D}_w \not=\emptyset.$
On the other hand, $C_u$ and $C_w$ do not cross each other, but they share a Jordan arc containing neither $u$ nor $w$.
If  $\mathcal{D}_u \subset \mathcal{D}_w$ (or $\mathcal{D}_w \subset \mathcal{D}_u$),
then $u$ and $z$ are both in $\mathcal{D}_w$ (or $w$ and $z$ are both in $\mathcal{D}_u$), which is impossible.
Otherwise, $u$ and $z$ are both in $\mathcal{D}_w$, and at the same time $w$ and $z$ are both in $\mathcal{D}_u$, which is again a contradiction.\qed
\end{proof}
\fi

Let $G(k)$ denote a graph consisting of $k$ pairwise edge-disjoint triangles that intersect in a single vertex.
The drawing of $G(3)$ as a quasi-thrackle, depicted in Figure~\ref{fig:32n}, can be easily generalized to any $k$. Therefore, we obtain the following
\begin{lemma}
\label{lemma:quasiLower}
For every $k$, the graph $G(k)$ can be drawn as a quasi-thrackle.
\end{lemma}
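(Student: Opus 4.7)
My plan is to construct an explicit drawing of $G(k)$ in the plane realizing the quasi-thrackle property, generalizing the depiction of $G(3)$ in Figure~\ref{fig:32n}, and to verify the conditions directly.

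First, I would identify the combinatorial skeleton forced on any quasi-thrackle drawing of $G(k)$. Two simple closed curves in $\mathbb R^2$ always meet in an even number of transverse points. Viewing each triangle $T_i$ as a simple closed curve through the shared vertex $v$, the intersection count between $T_i$ and $T_j$ (for $i\ne j$) away from $v$ is the sum of the crossings of their five non-adjacent edge pairs; in a quasi-thrackle each of these is odd, so the total is odd. Hence the two triangles must also cross transversally at $v$, which means the edges of $T_i$ and $T_j$ interleave in cyclic order around $v$. Requiring this for every pair $\{i,j\}$ forces the cyclic order of the $2k$ edges at $v$ to be (up to rotation and reflection) $va_1, va_2, \ldots, va_k, vb_1, vb_2, \ldots, vb_k$.

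Next, I realize this skeleton explicitly. Put $v$ at the origin and place $a_i$ at angle $(i-1)\pi/k$ on the unit circle, with $b_i$ slightly perturbed from angle $\pi + (i-1)\pi/k$. Draw each radial edge $va_i, vb_i$ as a straight segment. For each edge $a_ib_i$, begin with the straight near-diameter chord from $a_i$ to $b_i$ pushed just to one side of $v$ by a small perturbation; then locally modify the arc near every radial edge $va_j, vb_j$ (with $j\ne i$) that the straight chord fails to cross, adding a small detour that picks up exactly one crossing with that edge without introducing any other new crossings. Finally, choose the perturbations of the $k$ near-diameter arcs so that any two of them cross exactly once in a small neighborhood of $v$.

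Verification is a case analysis on the non-adjacent edge pairs between distinct triangles. Adjacent edges meet only at their shared vertex by construction (distinct radial directions from $v$; arcs starting and ending at their own endpoints). Each radial-to-arc pair $\{va_i,\,a_jb_j\}$ or $\{vb_i,\,a_jb_j\}$ with $i\ne j$ crosses exactly once, either from the straight-chord crossing or from a compensating detour. Each arc-to-arc pair $\{a_ib_i,\,a_jb_j\}$ with $i\ne j$ crosses exactly once near $v$ by the perturbation design. The main obstacle I foresee is coordinating the detours and perturbations so that all parity conditions hold simultaneously and no unintended crossings are created; the cleanest way to handle this is inductively on $k$, adding one triangle at a time and routing the new arc $a_kb_k$ through the existing drawing along a carefully chosen path inside a thin tubular neighborhood.
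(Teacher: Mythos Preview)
Your preliminary analysis of the cyclic order forced at $v$ is correct and pleasant, though it is a necessary condition and not needed for the existence claim. The real problem is in your verification. You assert that every non-adjacent pair of edges in the final drawing meets \emph{exactly once}. If that were so, your drawing would be a thrackle, not merely a quasi-thrackle; but Lemma~\ref{lemma:3-cycles} says a thrackle contains at most one triangle, so $G(k)$ admits no thrackle drawing for any $k\ge 2$. Your verification therefore cannot be correct as stated.

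The place it breaks is the detour step. A local modification of the arc $a_ib_i$ that changes its crossing number with the radial segment $va_m$ by one must, topologically, push the arc around one of the endpoints $v$ or $a_m$. Pushing around $v$ flips the parity with \emph{every} edge at $v$, including the adjacent edges $va_i,vb_i$; pushing around $a_m$ also flips the parity with the other edge incident to $a_m$, namely the arc $a_mb_m$. So ``without introducing any other new crossings'' is impossible, and your later claim that each arc--arc pair crosses exactly once near $v$ ignores precisely these extra crossings created by the detours. Equivalently: for the closed curve $T_m$, the parity of $|a_ib_i\cap T_m|$ is determined by which side of $T_m$ the points $a_i,b_i$ lie on, and with your straight radii and a near-diametral $a_mb_m$ both lie outside the thin sliver $T_m$, giving even parity, whereas a quasi-thrackle needs it odd.

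The paper's own argument is just ``the figure for $k=3$ generalizes'', so a genuine written construction would add value; but it has to target \emph{odd} crossing numbers from the start, not exactly one. One clean route is the inductive one you mention at the end: assume a quasi-thrackle drawing of $G(k-1)$ and thread the new path $va_k\cup a_kb_k\cup b_kv$ so that it is adjacent-crossing-free and has odd intersection with every non-adjacent edge, using parity bookkeeping rather than exact counts. Alternatively, place the vertices so that each triangle $T_m$ actually separates $a_i$ from $b_i$ for all $i\ne m$ (which your near-antipodal layout does not achieve without further bending of the radii or the arcs).
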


In view of Lemma~\ref{lemma:3-cycles}, $G_k$ cannot be drawn as a thrackle for any $k>1$.
Thus, the class of thrackles is
a proper sub-class of the class of quasi-thrackles.

\begin{figure}[htp]
\centering
\iflong
\subfloat[]{
\includegraphics[scale=0.5]{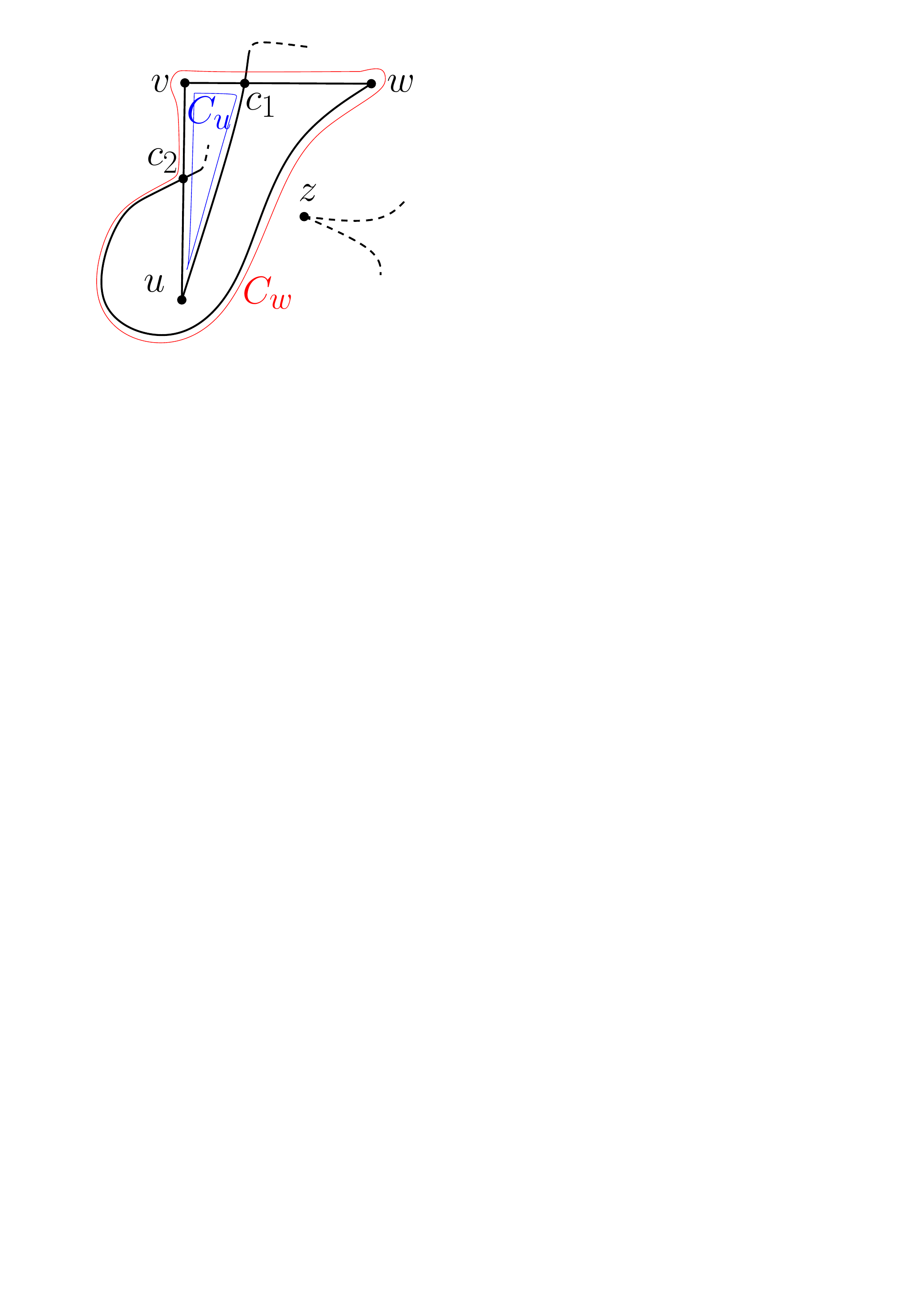}\label{fig:c4}}
\hspace{10pt}
\fi
\subfloat[]{
\includegraphics[scale=0.5]{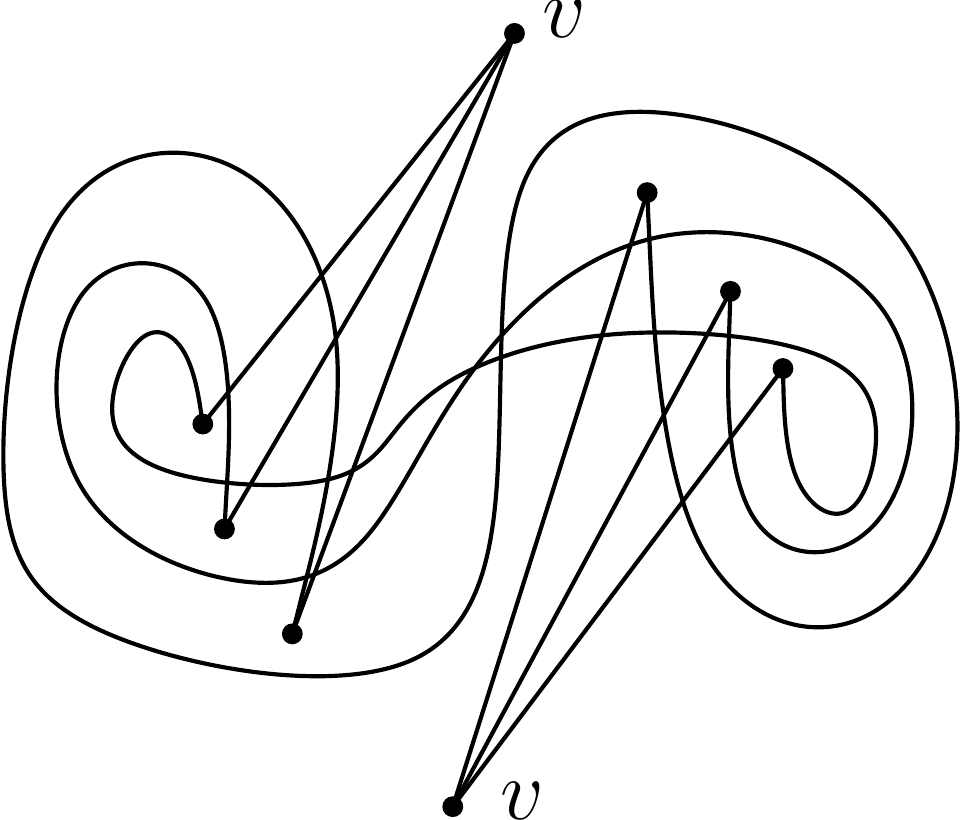}\label{fig:32n}}
\caption{\iflong(a) An illustration for the proof of Lemma~\ref{lemma:c4}. (b)\fi A drawing of $G(3)$ as a quasi-thrackle. The two copies of the vertex $v$ are identified in the actual drawing.}
\end{figure}

Cairns and Nikolayevsky~\cite{CN00} proved that every generalized thrackle of $n$ vertices has at most $2n-2$ edges, and that this bound cannot be improved.
The graphs $G(k)$ show that for $n=2k+1$, there exists a quasi-thrackle with $n$ vertices and with $\frac{3}{2}(n-1)$ edges. According to Theorem~\ref{lemma:quasiUpper}, no quasi-thrackle with $n$ vertices can have more edges.

\medskip

\noindent{\it Proof of Theorem~\ref{lemma:quasiUpper}. }
 Suppose that the theorem is false, and let $G$ be a counterexample with the minimum number $n$ of vertices.

We can assume that $G$ is 2-vertex-connected.
Indeed, otherwise $G=G_1\cup G_2$, where $|V(G_1) \cap V(G_2)|\ge 1$
and $E(G_1) \cap E(G_2)=\emptyset$. Suppose that $|V(G_1)|=n'$.
By the choice of $G$, we have
$|E(G)|=  |E(G_1)| + |E(G_2)| \le  \frac32(n'-1)+\frac32(n-n') = \frac32(n - 1)$, so $G$ is not a counterexample.

Suppose first that $G$ is bipartite. By Corollary~\ref{thm:parityEmbedding}, $G$ (as an abstract graph) can be embedded in the Euclidean plane.
By Lemma~\ref{lemma:c4}, all faces in this embedding are of size at least 6. Using a standard double-counting argument, we obtain that $2e\ge 6f$, where $e$ and $f$ are the number of edges and faces of $G$, respectively. By Euler's formula, we have $e+2=n+f$. Hence, $6e+12\leq 6n+2e$, and rearranging the terms we obtain $e\le{3\over 2}(n-6)$, contradicting our assumption that $G$ was not a counterexample.

If $G$ is not bipartite, then, according to Corollary~\ref{thm:parityEmbedding}, it has a parity embedding in the
projective plane. By Lemma~\ref{lemma:c4}, $G$ contains no 4-cycle. It does not have loops and multiple edges, therefore, the embedding has no 4-face. $G$ cannot have a 5-face, because the facial walk of a 5-face would be either a one-sided 5-cycle (which is impossible), or it would contain a triangle and a cut-vertex (contradicting the 2-connectivity of $G$). The embedding of $G$ also does not have a 3-face, since $G$ is bipartite. By Euler's formula, $e+1=n+f$ and, as in the previous paragraph, we conclude that $6e+6\leq 6n+2e$, the desired contradiction.\qed

\bibliographystyle{plain}
\bibliography{bib}
\newpage

\section{Omitted proofs}

\begin{figure}[htp]
\centering
\includegraphics[scale=0.5]{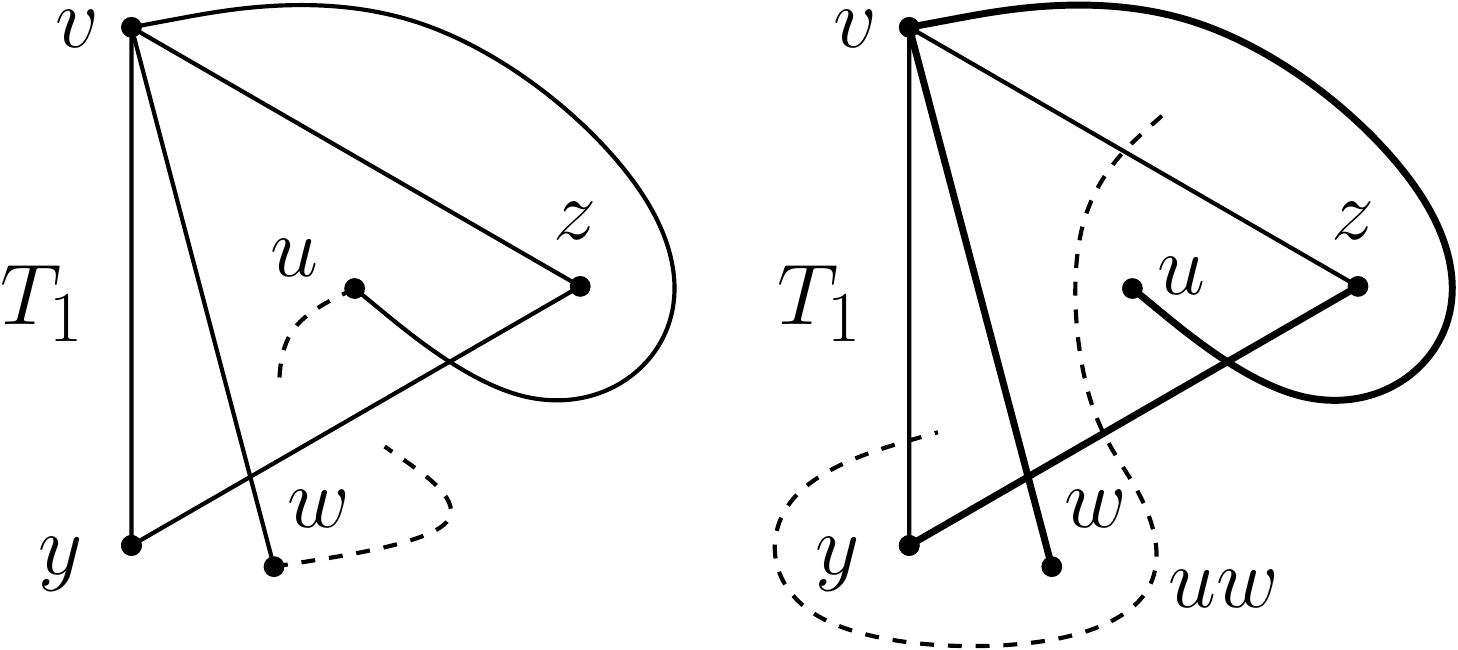}
\caption{An illustration for the proof of Lemma~\ref{lemma:3-cycles}.}
\label{fig:3-cycle}
\end{figure}

\rephrase{Lemma}{\ref{lemma:3-cycles}}{
A thrackle does not contain more than one triangle.
}
\begin{proof}
Refer to Fig.~\ref{fig:3-cycle}.
By Lemma~\cite[Lemma 2.1]{LPS97}, every pair of triangles in a thrackle share a vertex.
A pair of triangles cannot share an edge, otherwise they would form a 4-cycle, and a thrackle cannot contain a 4-cycle, since 4-cycle is not a thrackle, which is easy to check.

Let $T_1=vzy$ and $T_2=vwu$ be two triangles that have a vertex $v$ in common.
By Lemma~\cite[Lemma 2.2]{LPS97}, the two closed curves representing $T_1$ and $T_2$ properly cross each other at $v$.
Hence, the closed Jordan curve $C_1$ corresponding to $T_1$ contains $w$ in its interior and $u$ in its exterior.
Thus, the drawing of $T_1\cup \{uv,uw\}$ in a thrackle is uniquely determined up to isotopy and the choice of the outer face.
If we traverse the edge $wu$ from one endpoint to the other, we encounter its crossings  with the edges $vy,yz$, and $zv$ in this or in the reversed order.
 Indeed, the crossings between $wu$ and $vz$, and $wu$ and $vy$ must be in different
connected components of the complement of the union of $zy,vw$, and $vu$ in the plane, see Fig.~\ref{fig:3-cycle} right.
By symmetry, the crossing of $zy$ and $wu$ is on both $zy$ and $wu$ between the other two crossings.
However, this is impossible in a thrackle. We obtain a contradiction, which proves the lemma.\qed
\end{proof}

\rephrase{Proposition}{\ref{claim:cycles}}{
In the parity embedding of a 2-connected thrackle in the projective plane, the facial walk of every $8^{-}$-face is a cycle, that is, it has no repeated vertex.
}
\begin{proof}
If $G$ is bipartite, the claim follows by the 2-connectivity of $G$ and by the fact that the 4-cycle is not a thrackle.

Suppose $G$ is not bipartite. Then $G$ cannot contain $4^{-}$-face,
since we excluded triangles (by the hypothesis of the lemma) and 4-cycles (which are not thrackles).
We can also exclude any 5-face $F$, because either the facial walk of $F$ is a 5-cycle, which is impossible in a parity embedding, or the facial walk contains a triangle.

Analogously, if $F$ is a 7-face, its facial walk cannot be a  cycle (with no repeated vertex).
Hence, the support of $F$ must contain a 5-cycle. Using the fact that $G$ has no
triangle and 4-cycle, we conclude that $F$ must be incident to a cut-vertex, a contradiction.

It remains to deal with 6-faces and 8-faces.
If the facial walk of a 6-face $F$ is not a 6-cycle, then its support is a path of length three or a 3-star. In this case, $G$ is a tree on three vertices, contradicting our assumption that $G$ is 2-connected. Thus, the facial walk of every 6-face must be a 6-cycle.

The support of the facial walk of an 8-face $F$ cannot contain a 5-cycle, because in this case it would also contain a triangle. Therefore, the support of $F$ must contain a 6-cycle. The remaining (2-sided) edge of $F$ cannot be a diagonal of this cycle (as then it would create a triangle or a 4-cycle), and it cannot be a ``hanging'' edge (because this would contradict the 2-connectivity of $G$). This completes the proof of the proposition.\qed
\end{proof}

%

\rephrase{Lemma}{\ref{lemma:c4}}{
$C_4$ cannot be drawn as a quasi-thrackle.
}

\begin{proof}
Suppose for contradiction that $C_4=uvwz$ can be drawn as a quasi-thrackle; see Fig.3.
Assume without loss of generality that in the corresponding drawing, the path $uvw$ a path $uvw$ does not intersect itself.
Let $c_1$ denote the first crossing along $uz$ (with $vw$) on the way from $u$.
Let $c_2$ denote the first crossing along $wz$ (with $uv$) on the way from $w$.
Let $C_u$ denote the closed Jordan curve consisting of $uv$; the portion of
$uz$ between $u$ and $c_1$; and the portion of $vw$ between $v$ and $c_1$.
Let $C_w$ denote the closed Jordan curve consisting of $vw$; the portion of
$wz$ between $w$ and $c_2$; and the portion of $uv$ between $v$ and $c_2$.

\begin{figure}[htp]
\centering
\label{fig:c5}
\includegraphics[scale=0.5]{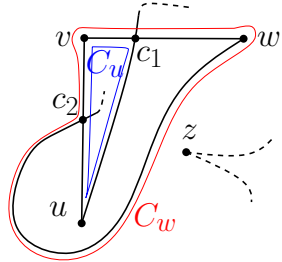}
\caption{An illustration for the proof of Lemma~\ref{lemma:c4}.}
\end{figure}

Observe that $z$ and $w$ are not contained in the same connected component of the complement
of $C_u$ in the plane. Indeed, $wz$ crosses $C_u$ an odd number of times, since it can cross
it only in $uv$. Let $\mathcal{D}_u$ denote the connected component of the complement
of $C_u$ containing $z$. By a similar argument,
$z$ and $u$ are not contained in the same connected component
of the complement of $C_w$ in the plane. Let $\mathcal{D}_w$ denote the connected component of the complement
of $C_w$  containing $z$.

Since $z\in\mathcal{D}_u \cap \mathcal{D}_w$, we have that $\mathcal{D}_u \cap \mathcal{D}_w \not=\emptyset.$
On the other hand, $C_u$ and $C_w$ do not cross each other, but they share a Jordan arc containing neither $u$ nor $w$.
If  $\mathcal{D}_u \subset \mathcal{D}_w$ (or $\mathcal{D}_w \subset \mathcal{D}_u$),
then $u$ and $z$ are both in $\mathcal{D}_w$ (or $w$ and $z$ are both in $\mathcal{D}_u$), which is impossible.
Otherwise, $u$ and $z$ are both in $\mathcal{D}_w$, and at the same time $w$ and $z$ are both in $\mathcal{D}_u$, which is again a contradiction.\qed
\end{proof}

\end{document}